\newtheorem{proposition}{Proposition}[section]
\newtheorem{definition}{Definition}[section]
\newtheorem{theorem}{Theorem}[section]
\newtheorem{problem}{Problem}[section]
\title{\vspace{-1.5cm} 
    Magic property of fullerenes 
}
\author{Đorđe Barali\'{c}\and Adam Farhat}
\address{ \scriptsize{Mathematical Institute SASA, Belgrade, Serbia }}
\email{djbaralic@mi.sanu.ac.rs}
\address{\scriptsize{Dubai American Academy, United Arab Emirates}}
\email{adamfermata@gmail.com}
\begin{document}
\begin{abstract} 
Fullerenes are an allotrope of carbon having a hollow, cage-like structure. The atoms in the molecule are arranged in pentagonal and hexagonal rings such that each atom is connected to three other atoms. Simple polyhedra having only pentagonal and hexagonal faces are a mathematical model for fullerenes. We say that a fullerene with $n$ vertices has a magic property if the numbers $1, 2, \dots, n$ may be assigned to its vertices so that the sums of the numbers on each pentagonal face are equal and the sums of the numbers in each hexagonal face are equal. We show that $C_{8n+4}$  does not admit such an arrangement for all $n$, while there are fullerenes, like $C_{24}$ and $C_{26}$ that have many nonisomorphic such arrangements. 
\end{abstract}

\maketitle

\section{Introduction}

Forty years ago, Sir Harold W. Kroto, Robert F. Curl, Jr., and Richard Smalley discovered the first fullerene $C_{60}$, also known as buckminsterfullerene or "buckyball", see \cite{Kroto}. For this discovery, they were awarded the Nobel Prize for Chemistry in 1996.  The identification of fullerenes significantly broadened the range of recognized carbon allotropes, which had previously been restricted to graphite, diamond, and amorphous forms of carbon such as soot and charcoal. After the discovery of buckminsterfullerene, the existence of similar structures having 70, 76, 78, 82, 84, 90, 94, or 96 carbon atoms was confirmed. They have been the focus of extensive research, concerning both their chemical properties and their technological uses, particularly in materials science, electronics, and nanotechnology.

The experimental study of fullerenes was accompanied by theoretical investigations based on mathematical models of fullerene molecules called fullerene graphs. The vertices of the graphs are the atoms, and the edges are the bonds between the atoms in the molecules. Mathematically, a fullerene is a 3-connected 3-regular planar graph with only pentagonal faces and hexagonal faces. Equivalently, a mathematical fullerene may be regarded as a simple 3-polytope whose facets are pentagons or hexagons. Euler's formula implies that the number of pentagonal faces in a fullerene is 12 while the number of vertices is even. Gr\"{u}nbaum and Motzkin in \cite{Motzkin} showed that there exists a fullerene with any even $n\geq 24$  and with $n=20$ vertices, and that there is no fullerene with $n=22$ vertices. 

Barali\'{c} and Milenkovi\'{c} in \cite{permutohedron} proposed a study of an interesting property motivated by magic square configurations. They found an example of an arrangement of the first twenty four positive integers in across twenty four  vertices of the 3-dimensional permuthohedron satisfying that the sums of the numbers in the vertices of each square and each hexagonal facets are constant. They called this property \textit{magic}. The notion of a magic property can  be introduced similarly for fullerenes. A fullerene with $n$ vertices has a magic property if the first $n$ positive integers can be associated one per vertex so that the sum of the numbers in each pentagonal face is constant, as well as the respective sum corresponding to each hexagonal face. The aim of this article is to present and discuss some results about the magic property of fullerenes.  

\section{Fullerenes and the magic property}

Let us denote by $V=\left\{ {v}_1, {v}_2, \dots, {v}_n \right\}$ the set of vertices of a fullerene $C_n$. 

\begin{definition} Let $\mathcal{H}$ and $\mathcal{P}$ be the sets of hexagonal and pentagonal faces on a fullerene $C_n$. Fullerene $C_n$ is said to have the magic property if there is a bijection $f\colon V \to \{1, 2, \dots, n\}$, referred to as a magic configuration, such that
\[
\sum_{v \in H_i} f(v) = S_h, \quad \forall H_i \in \mathcal{H}
\]
\[
\sum_{v \in P_j} f(v) = S_p, \quad \forall P_j \in \mathcal{P}
\]
where the sum per pentagon $S_p$ and the sum per hexagon $S_h$ are deemed the magic constants of fullerene. 
\end{definition}

A fullerene may admit many distinct magic configurations, including with different magic constants, as we will see in the next section. However, the magic constants $S_p$ and $S_h$ for a given fullerene $C_n$ satisfy the next relation.

\begin{proposition}If a fullerene $C_n$ has a magic property, then
\begin{align}\label{veza}
24 S_p+(n-20) S_h=3n(n+1).
\end{align}
\end{proposition}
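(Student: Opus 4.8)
The plan is to prove the identity by a double-counting (incidence) argument on vertex--face incidences, exploiting the fact that a fullerene is a simple $3$-polytope, so every vertex is $3$-valent and therefore lies on exactly three faces. First I would introduce the quantity
\[
\Sigma := \sum_{F}\ \Bigl(\ \sum_{v \in F} f(v)\ \Bigr),
\]
the sum of all face-sums taken over every face $F$ of $C_n$, and then evaluate $\Sigma$ in two independent ways.

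Grouping the faces by type, and assuming the magical property holds, every pentagon contributes $S_p$ and every hexagon contributes $S_h$. Since there are exactly $12$ pentagonal faces (by Euler's formula, as recalled in the introduction) and some number $h$ of hexagonal faces, this gives $\Sigma = 12\,S_p + h\,S_h$. On the other hand, switching the order of summation I would rewrite $\Sigma = \sum_{v \in V} f(v)\,d(v)$, where $d(v)$ denotes the number of faces incident to $v$. Because the polytope is simple and $3$-regular, $d(v) = 3$ for every vertex, so
\[
\Sigma = 3 \sum_{v \in V} f(v) = 3\cdot\frac{n(n+1)}{2},
\]
using that $f$ is a bijection onto $\{1, 2, \dots, n\}$, whence $\sum_v f(v) = \tfrac{n(n+1)}{2}$.

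To finish I need the hexagon count $h$ in terms of $n$. Counting edges through $2E = 3n$ and faces through $F = 12 + h$, Euler's relation $n - E + F = 2$ forces $h = \tfrac{n}{2} - 10$, hence $2h = n - 20$. Equating the two evaluations of $\Sigma$ and clearing the factor $\tfrac12$ then yields $24\,S_p + (n-20)\,S_h = 3n(n+1)$, exactly as claimed. I do not anticipate a genuine obstacle here: the only point demanding care is the bookkeeping that pins down $h$, since the rest follows immediately once the incidence sum $\Sigma$ is set up and read off in both directions.
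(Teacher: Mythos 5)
Your proof is correct and takes essentially the same route as the paper: the paper's one-line argument is exactly this double count, summing the numbers over all facets and using that each vertex lies on exactly three facets to get three times $\tfrac{n(n+1)}{2}$. You merely make explicit the bookkeeping the paper leaves implicit, namely deriving the hexagon count $h=\tfrac{n}{2}-10$ from Euler's formula, which is a worthwhile but not substantively different addition.
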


\begin{proof}
The relation follows by summing the numbers over all faces of $C_n$, which yields three times the sum of the first $n$ positive integers, since each vertex belongs to exactly three faces.
\end{proof}

Barali\'{c} and Milenkovi\'{c} used \eqref{veza} and a divisibility argument in \cite{permutohedron} to show that there is no magic configuration on the dodecahedron $C_{20}$. The same reasoning can be applied to the buckminsterfullerene $C_{60}$. If there were a magic configuration on $C_{60}$ then by $\eqref{veza}$ the magic constants would satisfy \[24 S_p+40 S_h=3 \cdot 60 \cdot 61,\] which is impossible due to the divisibility of the left-hand side by $8$. Indeed, the argument straightforwardly generalizes to the following result. 

\begin{theorem} 
If  $n \equiv 4\pmod{8}$ than a fullerene $C_n$ does not admit a magic configuration.
\end{theorem}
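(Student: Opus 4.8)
The plan is to apply the divisibility argument sketched in the paragraph just before the theorem, but now in full generality for $n \equiv 4 \pmod 8$. The relation \eqref{veza} gives
\[
24 S_p + (n-20) S_h = 3n(n+1),
\]
so the strategy is to examine this equation modulo a suitable power of $2$ and derive a contradiction from the assumed existence of integer magic constants $S_p$ and $S_h$.

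First I would analyze the arithmetic of each term under the hypothesis $n \equiv 4 \pmod 8$. The coefficient $24$ is divisible by $8$, so the term $24 S_p$ contributes nothing modulo $8$. The coefficient of $S_h$ is $n - 20 \equiv 4 - 20 \equiv -16 \equiv 0 \pmod 8$, so that term also vanishes modulo $8$; hence the entire left-hand side is divisible by $8$. Next I would compute the right-hand side $3n(n+1)$ modulo $8$: since $n \equiv 4 \pmod 8$ we have $n+1 \equiv 5 \pmod 8$, so $n(n+1) \equiv 4 \cdot 5 = 20 \equiv 4 \pmod 8$, and therefore $3n(n+1) \equiv 12 \equiv 4 \pmod 8$. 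Thus the right-hand side is congruent to $4$ modulo $8$ while the left-hand side is congruent to $0$, which is the desired contradiction.

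The conclusion then follows: if $C_n$ admitted a magical configuration with $n \equiv 4 \pmod 8$, the integers $S_p, S_h$ produced by that configuration would have to satisfy \eqref{veza}, but the two sides are incongruent modulo $8$, so no such integers exist and no magical configuration can exist.

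There is essentially no hard obstacle here, since the whole argument is a single congruence computation; the only point requiring a little care is verifying that \emph{both} coefficients on the left, namely $24$ and $n-20$, are divisible by $8$ under the hypothesis, so that the left-hand side is unconditionally $0 \pmod 8$ regardless of the values of $S_p$ and $S_h$. This is exactly what makes the modulus $8$ the right choice: a smaller power of two would not force both terms to vanish, and the residue $4$ on the right is precisely what a fullerene with $n \equiv 4 \pmod 8$ produces.
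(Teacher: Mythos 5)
Your proof is correct and follows exactly the paper's own argument: reduce the relation \eqref{veza} modulo $8$, note that both coefficients $24$ and $n-20$ vanish modulo $8$ when $n \equiv 4 \pmod 8$, and contradict this with $3n(n+1) \equiv 4 \pmod 8$. You simply spell out the congruence computations that the paper leaves implicit.
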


\begin{proof} Assume the contrary. Then $24 S_p+(n-20) S_h\equiv 0 \pmod{8}$. On the other hand $3 n(n+1)\equiv 4 \pmod{8}$. The contradiction!
    
\end{proof}

Based on congruences modulo $8$ we can deduce little about the magic constants in the remaining cases.

\begin{proposition} \[S_h\equiv \begin{cases}
			0 \pmod{2}, & \text{if $n\equiv 0 \pmod{8}$ }\\
            3 \pmod{4}, & \text{if $n\equiv \pm 2 \pmod{8}$}         
		 \end{cases}\]
\end{proposition}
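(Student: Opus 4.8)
The plan is to extract all the needed information from the single identity \eqref{veza} by reducing it modulo $8$. Since $24 = 8\cdot 3$, the term $24 S_p$ vanishes modulo $8$, so \eqref{veza} collapses to the congruence $(n-20) S_h \equiv 3n(n+1) \pmod 8$, which no longer involves $S_p$ at all. Everything then reduces to computing the two residues $n-20 \bmod 8$ and $3n(n+1) \bmod 8$ in each admissible class of $n$, and solving the resulting linear congruence for $S_h$.

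First I would tabulate the relevant residues. For $n\equiv 0\pmod 8$ one gets $n-20\equiv 4$ and $n(n+1)\equiv 0$, hence $4S_h\equiv 0\pmod 8$; this forces $S_h$ to be even, giving the first case. For $n\equiv 2\pmod 8$ one finds $n-20\equiv 6$ and $3n(n+1)\equiv 3\cdot 6\equiv 2$, so the congruence is $6S_h\equiv 2\pmod 8$. For $n\equiv 6\pmod 8$ one finds $n-20\equiv 2$ and $3n(n+1)\equiv 3\cdot 2\equiv 6$, so the congruence is $2S_h\equiv 6\pmod 8$.

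The one point requiring care—and the only real obstacle—is that in each case the coefficient of $S_h$ is even, so it is not invertible modulo $8$ and one cannot simply divide through. Instead I would solve each congruence by cancelling the common factor and descending to a smaller modulus. In the case $n\equiv 6$, dividing $2S_h\equiv 6\pmod 8$ by $2$ yields $S_h\equiv 3\pmod 4$ directly. In the case $n\equiv 2$, rewriting $6S_h\equiv 2\pmod 8$ as $3S_h\equiv 1\pmod 4$ and using $3\equiv -1\pmod 4$ gives $S_h\equiv -1\equiv 3\pmod 4$, matching the claim. As a sanity check, both residues $3$ and $7$ modulo $8$ satisfy $6S_h\equiv 2\pmod 8$, and both lie in the class $3\pmod 4$, confirming that $3\pmod 4$ is the sharpest statement obtainable. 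This settles all three cases, and no further structural input about the fullerene is needed beyond \eqref{veza}.
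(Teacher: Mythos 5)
Your proof is correct: all residue computations check out ($n-20\equiv 4,6,2\pmod 8$ and $3n(n+1)\equiv 0,2,6\pmod 8$ in the three cases), and the careful handling of the non-invertible even coefficients by descending from modulus $8$ to modulus $4$ is exactly what is needed. The paper states this proposition without any proof, but its surrounding text makes clear the intended argument is precisely this reduction of \eqref{veza} modulo $8$, so your write-up simply supplies the details the paper leaves implicit.
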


Unfortunately, we cannot obtain much from utilizing these kinds of elementary number theory approach. Working modulo $3$, the most we can say is the following.

\begin{proposition}
If  $n \not\equiv 2\pmod{3}$ and a fullerene $C_n$ admits a magic configuration, then $S_h\equiv0 \pmod{3}$.
\end{proposition}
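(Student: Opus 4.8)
The plan is to work entirely modulo $3$ with the single identity \eqref{veza}, exactly as the preceding modulo $8$ argument exploited that same relation. First I would reduce each of the three terms of $24 S_p + (n-20) S_h = 3n(n+1)$ modulo $3$. Since $24 = 3 \cdot 8$, the term $24 S_p$ vanishes modulo $3$ regardless of the value of $S_p$, so the pentagon constant drops out of the picture entirely; and the right-hand side $3n(n+1)$ is plainly a multiple of $3$. Hence the identity collapses to the single congruence $(n-20)\,S_h \equiv 0 \pmod{3}$.

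The next step is to decide when the factor $n-20$ may be cancelled. Because $20 \equiv 2 \pmod{3}$, we have $n-20 \equiv n-2 \pmod{3}$, so $n-20$ is divisible by $3$ precisely when $n \equiv 2 \pmod{3}$. Under the stated hypothesis $n \not\equiv 2 \pmod{3}$ the residue of $n-20$ is therefore nonzero modulo $3$, i.e. a unit in $\mathbb{Z}/3\mathbb{Z}$. Multiplying the congruence $(n-20)\,S_h \equiv 0 \pmod{3}$ by the inverse of this unit immediately yields $S_h \equiv 0 \pmod{3}$, which is the assertion.

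The argument has essentially no obstacle; the only point requiring any care is the cancellation step, and the hypothesis $n \not\equiv 2 \pmod{3}$ is exactly what licenses it (recall that $S_p$ and $S_h$ are integers, being sums of the assigned labels, so all congruences make sense). It is worth recording where the hypothesis is sharp: when $n \equiv 2 \pmod{3}$ the coefficient $n-20$ is itself divisible by $3$, the congruence $(n-20)\,S_h \equiv 0 \pmod{3}$ becomes vacuous, and no information about $S_h$ modulo $3$ can be extracted from \eqref{veza} alone. This explains precisely why the single excluded residue class is the one omitted from the statement.
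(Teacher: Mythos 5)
Your argument is correct and is precisely the intended one: the paper states this proposition without proof, but its surrounding text (``Using modulo 3, the most we can say\dots'') makes clear it rests on reducing \eqref{veza} modulo $3$, killing the $24S_p$ term and the right-hand side, and cancelling the unit $n-20 \equiv n-2 \pmod{3}$, exactly as you do. Your closing remark on why $n \equiv 2 \pmod{3}$ is genuinely excluded is a nice bonus the paper leaves implicit.
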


\section{Magic configurations on $C_{24}$ }

In the previous section, we established several nonexistence results for magic configurations on fullerenes. The simplest case of a fullerene is $C_{24}$: the Schlegel diagram for this fullerene is presented in Figure \ref{Figure1}. $C_{24}$ has $24$ vertices, twelve pentagonal and two hexagonal faces.

\begin{figure}[H]
    \centering
    \includegraphics[width=0.6\textwidth]{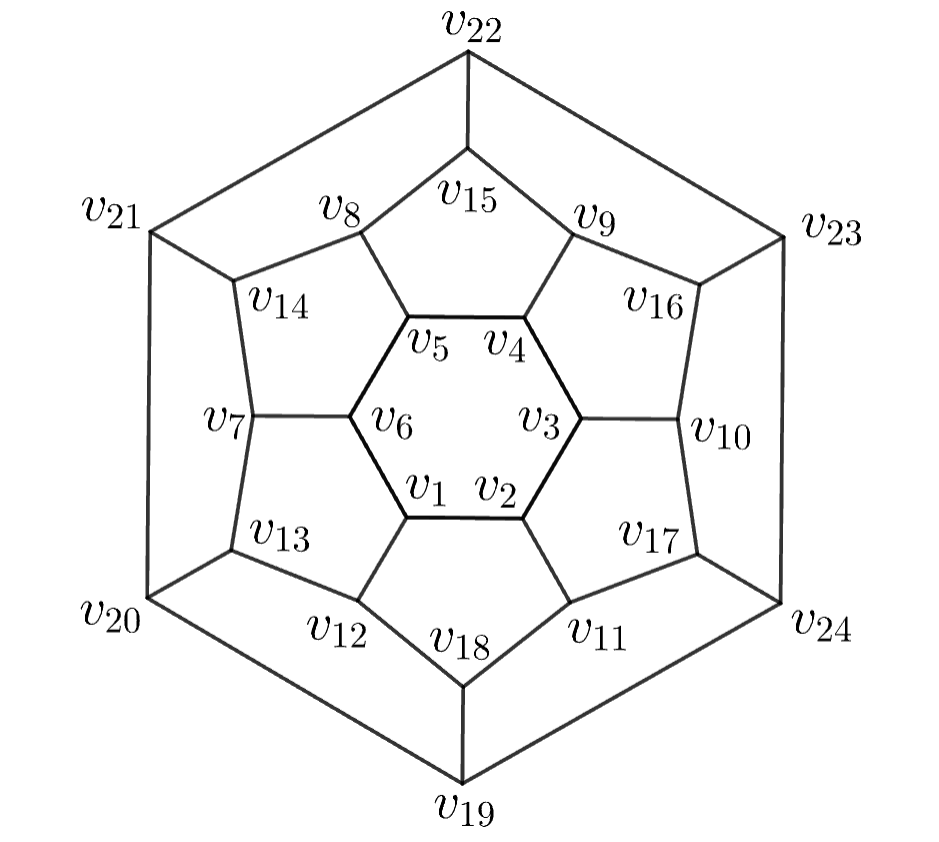} 
    \caption{Schlegel diagram of $C_{24}$}
    \label{Figure1} 
\end{figure}
We start the study of magic configurations on $C_{24}$ by examining its magic constants $S_p$ and $S_h$. Using \eqref{veza} we get

\begin{align} \label{veza24}
    6 S_p+S_h=450
\end{align}

The above linear Diophantine equation has solutions in positive integers, e.g. $S_p=50$ and $S_h=150$, so we cannot rule out the  possibility of a magic property of $C_{24}$. 

From \eqref{veza24} it is evident that $S_h\equiv 0 \pmod 6$. Two hexagonal $C_{24}$ do not share a vertex. The minimal possible value for $S_h$ is not less than the sum  when the numbers from 
 $\{1,2,3,4,\dots,12\}$ are placed on the hexagonal vertices, which is $39$. On the other hand $S_h$ is not greater than the sum when $\{13,14,15,16,\dots,24\}$ are placed on the hexagonal vertices, which is $111$.

However, $S_h\equiv 0 \pmod 6$ implies $S_h\in\{42,48,54,60,66,72,78,84,90,96,102,108\}$ and the corresponding values for $S_p$ are $S_p\in\{68, 67, 66,65,64,63,62,61,60,59,58, 57\}$. We exhausted all obvious number-theoretic arguments and no new constraints on the magic constants could be obtained. Attempting to search for a magic configuration with these constants by hand appears pointless. Therefore, we wrote a program \cite{github} to check which permutations of the first 24 positive integers satisfy the system \eqref{sis1} of Diophantine equations for each of the twelve possible pairs of magic constants $(S_p, S_h)$. The variables are associated with the vertices labeled in Figure \ref{Figure2}. More about the program will be given in Appendix A.  

\begin{equation}\label{sis1}
\begin{aligned}
v_1 + v_2 + v_3 + v_4 + v_5 + v_6 &= S_h &\quad
v_{19} + v_{20} + v_{21} + v_{22} + v_{23} + v_{24} &= S_h \\[0.3em]
v_5 + v_8 + v_{15} + v_9 + v_4 &= S_p &
v_4 + v_9 + v_{16} + v_{10} + v_3 &= S_p \\[0.3em]
v_3 + v_{10} + v_{17} + v_{11} + v_{2} &= S_p &
v_2 + v_{11} + v_{18} + v_{12} + v_1 &= S_p \\[0.3em]
v_1 + v_{12} + v_{13} + v_7 + v_6 &= S_p &
v_6 + v_7 + v_{14} + v_8 + v_5 &= S_p \\[0.3em]
v_{14} + v_8 + v_{15} + v_{22} + v_{21} &= S_p &
v_{15} + v_9 + v_{16} + v_{23} + v_{22} &= S_p \\[0.3em]
v_{16} + v_{10} + v_{17} + v_{24} + v_{23} &= S_p &
v_{17} + v_{11} + v_{18} + v_{19} + v_{24} &= S_p \\[0.3em]
v_{18} + v_{12} + v_{13} + v_{20} + v_{19} &= S_p &
v_{13} + v_7 + v_{14} + v_{21} + v_{20} &= S_p
\end{aligned}
\end{equation}

Surprisingly, our program found many  solutions in each of these twelve cases. 

\begin{table}[h]
\centering
\begin{tabular}{|cc|c|cc|}
\hline
$S_{p}$ & $S_{h}$ & \# of solutions & $S_{p}$ & $S_{h}$  \cr
\hline
57 & 108 & 576 & 68 & 42  \cr
58 & 102 & 936 & 67 & 48   \cr
59 & 96  & 2832 & 66 & 54   \cr
60 & 90  & 8832 & 65 & 60   \cr
61 & 84  & 11208 & 64 & 66   \cr
62 & 78  & 14592 & 63 & 72 \cr
\hline
\end{tabular}
\vspace{0.2 cm}
\caption{Number of magic configurations on $C_{24}$ with given magic constants}
\label{tab:solutions}
\end{table}

Figure \ref{Figure2} also illustrates the number of magic configurations on $C_{24}$. 

\begin{figure}[h]
    \centering
    \includegraphics[width=\textwidth]{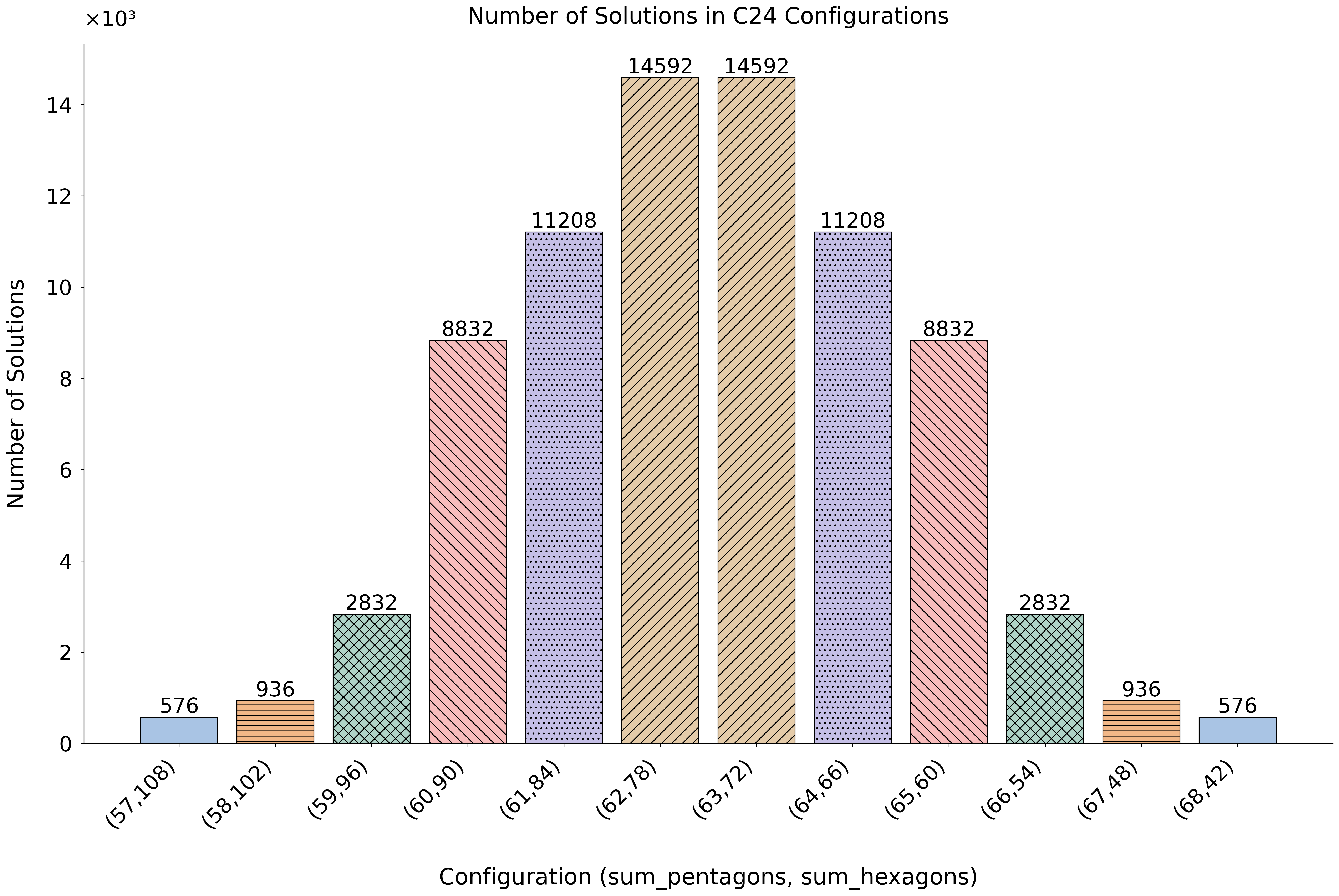} 
    \caption{Number of magic configurations on $C_{24}$ with given magic constants}
    \label{Figure2}
\end{figure}

 To conclude the analysis of $C_{24}$, we will provide examples of magic configurations on it. In Figure \ref{Figure3} we represent a magic configuration for each pair of twelve couples of the magic constants $S_p$ and $S_h$.

\begin{figure}[h]
    \centering
    \includegraphics[width=\textwidth]{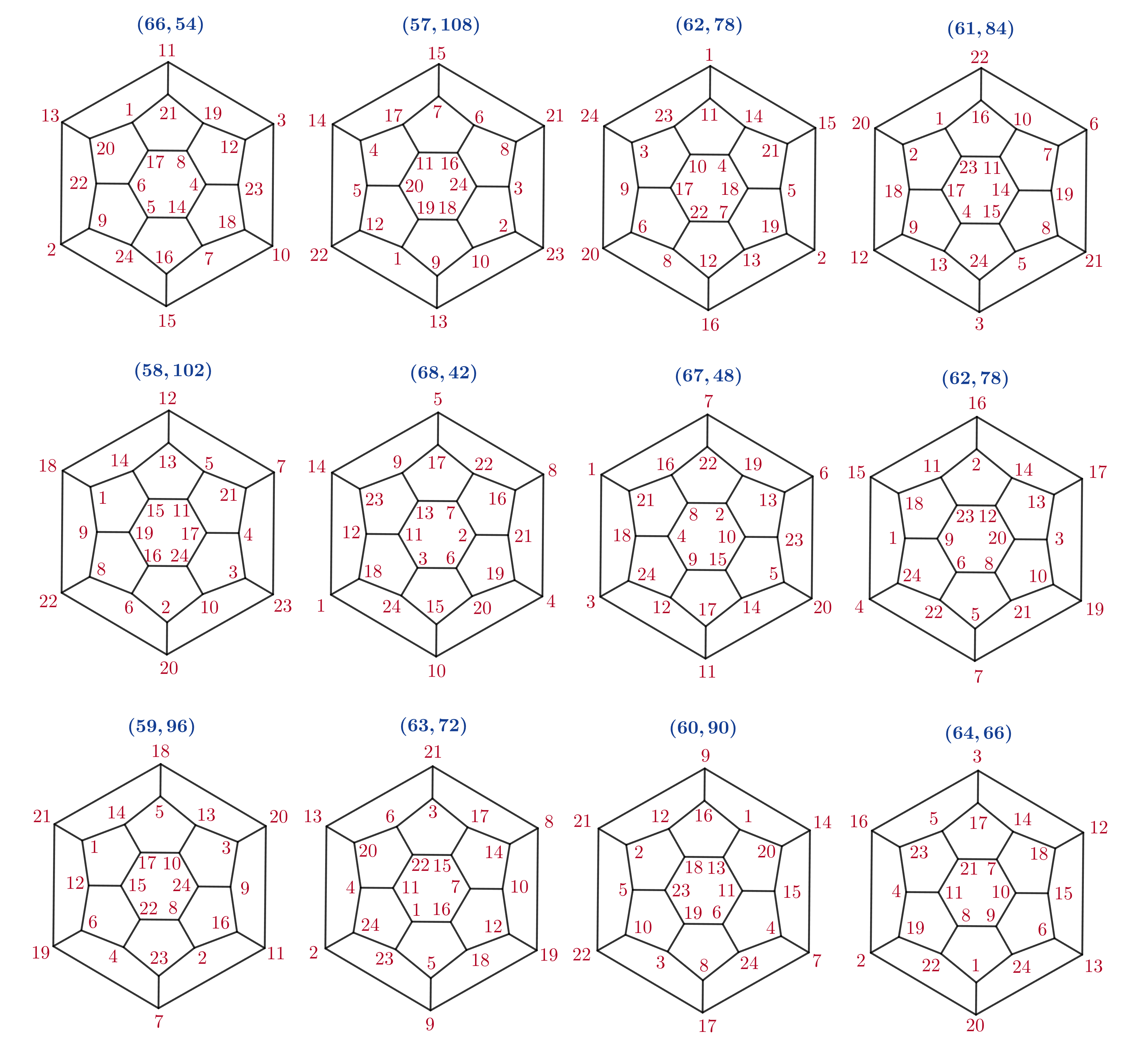} 
    \caption{Examples of magic configurations on $C_{24}$}\label{Figure3}
\end{figure}

\section{Magic Configurations on $C_{26}$ }

After analysing $C_{24}$, the next least complex fullerene is $C_{26}$, containing twelve pentagons, three hexagons, and 26 vertices. Its structure is shown in the Schlegel diagram in Figure \ref{Figure4}.

\begin{figure}[h]
    \centering
    \includegraphics[width=0.7\textwidth]{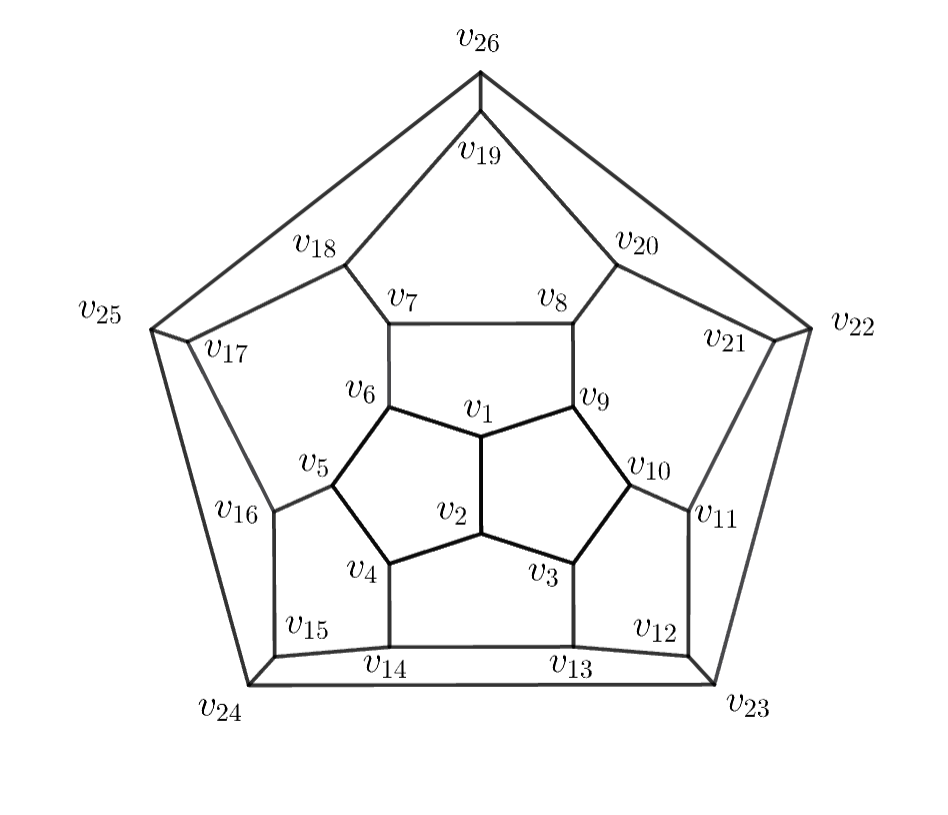} 
    \caption{Schlegel diagram of $C_{26}$}
    \label{Figure4}
\end{figure}

Similarly to $C_{24}$, we start studying the  magic configurations on $C_{26}$ by examining its magic constants $S_p$ and $S_h$. Using \eqref{veza} we get 
\begin{align} \label{veza26}
   12 S_p + 3 S_h = 1053
\end{align}

Since no two hexagonal faces of $C_{26}$ share a common vertex, using the same logic used in the analysis of $C_{24}$, the range of possible values for $S_h$ can be bounded by convention. That is,  the minimal value for $S_h$ cannot be less than the sum that arises when the smallest 18 integers $\{1,2,3,4,\dots,18\}$ are placed on the hexagonal vertices, yielding a sum of $57$. The same logic can be applied to find the maximum value of $S_h$ which is $105$. This exhausts all possible constraints for the magic constants of $C_{26}$ fullerene, and the Diophantine equation program must now be used to find all permutations of the first 26 numbers that are solutions for each of the 12 pairs of $(S_h,S_p)$.

\begin{equation}\label{sisC26}
\begin{aligned}
v_5 + v_6 + v_7 + v_{16} + v_{17} + v_{18} &= S_h 
&\quad v_{19} + v_{20} + v_{21} + v_{22} + v_{26} &= S_p \\[0.6em]
v_8 + v_9 + v_{10} + v_{11} + v_{20} + v_{21} &= S_h
&\quad v_{11} + v_{12} + v_{21} + v_{22} + v_{23} &= S_p \\[0.6em]
v_{12} + v_{13} + v_{14} + v_{15} + v_{23} + v_{24} &= S_h
&\quad v_3 + v_{10} + v_{11} + v_{12} + v_{13} &= S_p \\[0.6em]
v_1 + v_2 + v_4 + v_5 + v_6 &= S_p
&\quad v_2 + v_3 + v_4 + v_{13} + v_{14} &= S_p \\[0.6em]
v_1 + v_2 + v_3 + v_{10} + v_9 &= S_p
&\quad v_4 + v_5 + v_{14} + v_{15} + v_{16} &= S_p \\[0.6em]
v_1 + v_6 + v_7 + v_8 + v_9 &= S_p
&\quad v_{15} + v_{16} + v_{17} + v_{24} + v_{25} &= S_p \\[0.6em]
v_{17} + v_{18} + v_{19} + v_{25} + v_{26} &= S_p
&\quad v_{22} + v_{23} + v_{24} + v_{25} + v_{26} &= S_p \\[0.6em]
v_7 + v_8 + v_{18} + v_{19} + v_{20} &= S_p
\end{aligned}
\end{equation}

 In the above system, the variables correspond to the vertices marked in Figure \ref{Figure4}.

Our program \cite{github} found many solutions in each of these 12 cases, with the exception of $(73,59)$ and $(62, 103)$ where it found no solutions. Table \ref{t4} provides us with a summary of these solutions.

\begin{table}[H]
\centering
\begin{tabular}{|cc|c|cc|}
\hline
$S_p$ & $S_h$ & \# of Solutions & $S_p$ & $S_h$  \\
\hline
73 & 59  & 0 & 62 & 103 \\
72 & 63  & 84 & 63 & 99 \\
71 & 67  & 2796 & 64 & 95 \\
70 & 71  & 18228 & 65 & 91 \\
69 & 75  & 33252 & 66 & 87\\
68 & 79  & 58632 & 67 & 83 \\

\hline
\end{tabular}
\vspace{0.2cm}
\caption{Number of magic configurations on $C_{26}$ with given magic constants}\label{t4}
\end{table}

 The diagram in Figure \ref{Figure6} also represents the number of magic configurations on $C_{26}$ with given magic constants. 

\begin{figure}[H]
    \centering
    \includegraphics[width=\textwidth]{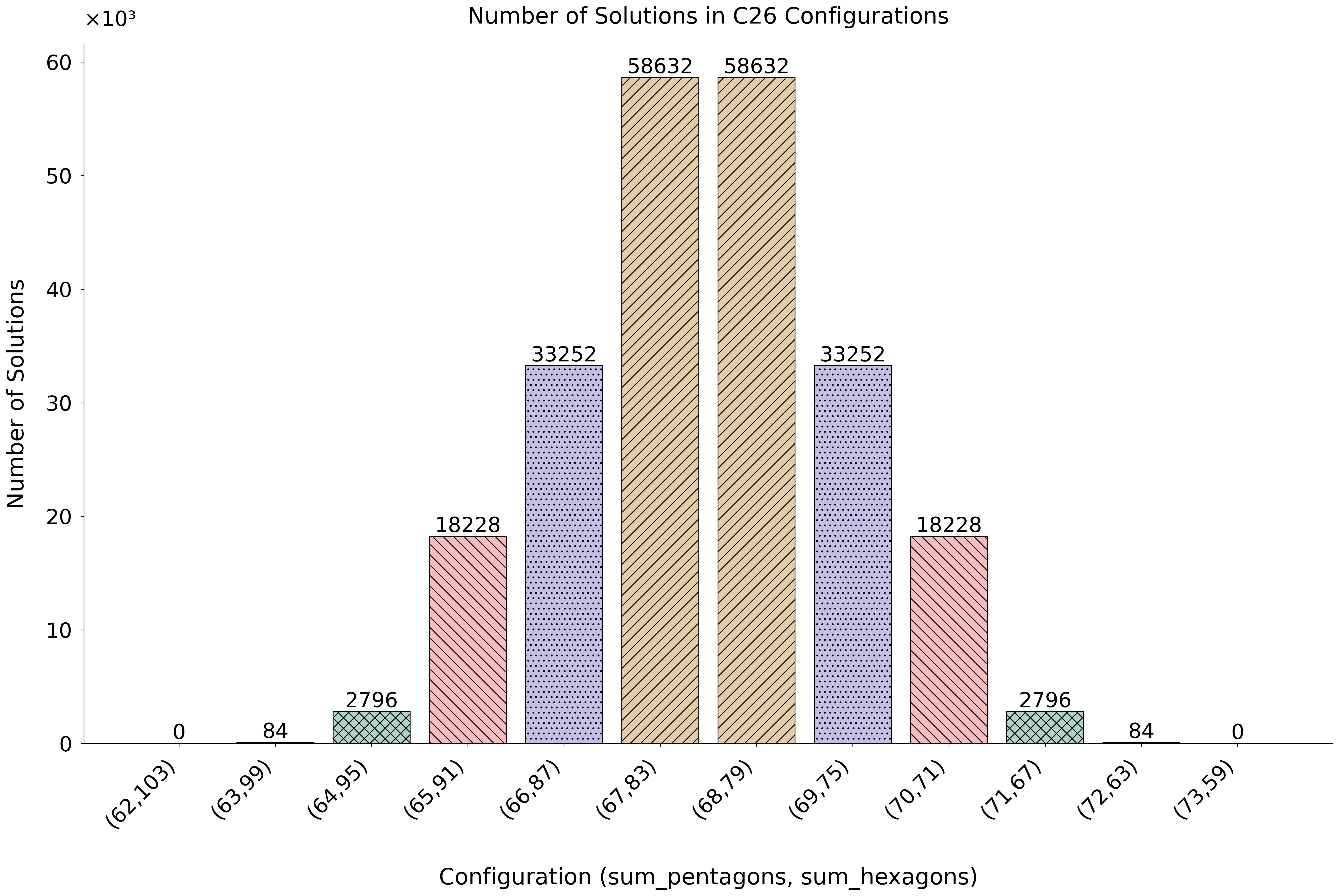} 
    \caption{Number of magic configurations on $C_{26}$ with given magic constants}
    \label{Figure6}
\end{figure}

In addition to this, we will provide examples of configurations on $C_{26}$ similar to that of the fullerene $C_{24}$. Figure \ref{Figure7} below displays a sample configuration on fullerene $C_{26}$ for 10 of the 12 pairs of magic constants, as constants $(73,59)$ and $(62,103)$ did not produce solutions.

\begin{figure}[H]
    \centering
    \includegraphics[width=\textwidth]{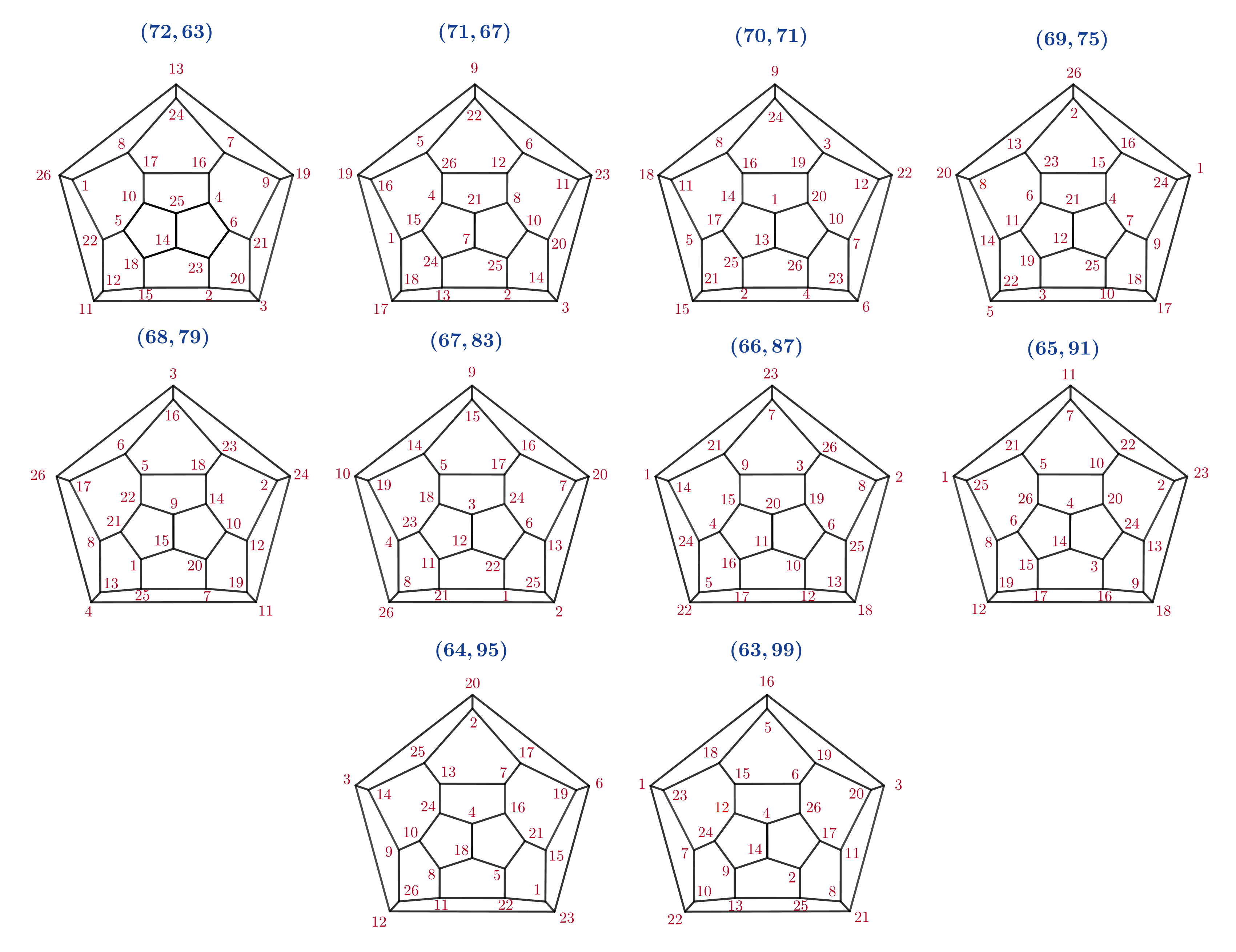} 
    \caption{Examples of magic configurations on $C_{26}$}
    \label{Figure7}
\end{figure}

\section{Symmetries of Fullerenes and Magic Configurations}

The study of the magic property of $C_{24}$ showed that a fullerene can have many magic configurations. Let us denote by $\mathcal{M}(C_n)$ the set of all magic configurations on $C_{n}$. $\mathcal{M}(C_n)$ is an interesting combinatorial structure associated with fullerene $C_n$, as will be seen in this section.

Assume that $V=\{v_1, v_2, \dots, v_n\}$ is the set of vertices of a fullerene $C_n$. An element of $\mathcal{M}(C_n)$ can be represented as a set $\{(v_1, f(v_1)),  (v_2, f(v_2)), \dots, (v_n, f(v_n))\}$ where $f\colon V\to \{1, 2, \dots, n\}$ is a magic configuration on $C_n$.

Let $G(C_n)$ be the automorphism group of the face poset of a fullerene $C_n$ and let $\theta \in G(C_n)$ be an automorphism of $C_n$. Then $\theta$ induces a map 
\begin{align}\label{dejstvo}
 \{(v_1, f(v_1)),   \dots, (v_n, f(v_n))\}\mapsto \{(\theta(v_1), f(v_1)),   \dots, (\theta(v_n), f(v_n))\}   
\end{align} We start  with the following apparent property.

\begin{proposition}\label{a1}
The group $G(C_n)$  acts freely on  $\mathcal{M}(C_n)$. 
\end{proposition}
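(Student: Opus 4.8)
The plan is to write down the natural action of $G(C_n)$ on functions $V \to \{1,\dots,n\}$ and verify that it restricts to $\mathcal{M}(C_n)$. The key observation I would establish first is that every $\sigma \in G(C_n)$ restricts to a permutation $\sigma_V$ of the vertex set $V$ (the atoms of the face poset), and that, being a rank-preserving poset isomorphism, it sends $2$-faces to $2$-faces while preserving the number of atoms lying below each $2$-face. Since a pentagon covers exactly five vertices and a hexagon exactly six, $\sigma$ must map pentagons to pentagons and hexagons to hexagons. This type-preservation is the structural fact on which the whole argument rests.

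Next I would define the action on configurations by $(\sigma\cdot f)(v)=f(\sigma_V^{-1}(v))$ for $v\in V$. That $\mathrm{id}\cdot f=f$ is immediate, and the compatibility $(\sigma\tau)\cdot f=\sigma\cdot(\tau\cdot f)$ follows from $(\sigma\tau)_V=\sigma_V\tau_V$ together with the inverse occurring in the definition, so this is a genuine left action on the set of all bijections $V\to\{1,\dots,n\}$.

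It then remains to show that $\sigma\cdot f\in\mathcal{M}(C_n)$ whenever $f\in\mathcal{M}(C_n)$. As a composite of the bijections $f$ and $\sigma_V^{-1}$ it is again a bijection onto $\{1,\dots,n\}$. For the face sums I would fix a hexagonal face $H$ and reindex the sum:
\[
\sum_{v\in H}(\sigma\cdot f)(v)=\sum_{v\in H}f(\sigma_V^{-1}(v))=\sum_{w\in\sigma_V^{-1}(H)}f(w).
\]
By the type-preservation above, $\sigma_V^{-1}(H)$ is the vertex set of a hexagonal face, so the last sum equals $S_h$ because $f$ is magical; the identical computation applied to a pentagonal face $P$ gives $S_p$. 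Hence $\sigma\cdot f$ is a magical configuration with the same constants, and the action indeed restricts to $\mathcal{M}(C_n)$.

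The only step demanding genuine care — which I would flag as the crux rather than a true obstacle — is the preservation of face type; once it is in hand the remaining verifications are purely formal. I would close by noting that, since the two constants are left unchanged, the action in fact respects the partition of $\mathcal{M}(C_n)$ according to the pair $(S_p,S_h)$, so each block indexed by an admissible pair is itself a $G(C_n)$-set. This is consistent with, and refines, the enumerations recorded in the preceding sections.
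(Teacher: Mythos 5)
Your proof is correct and follows essentially the same route as the paper: the crux in both is that an automorphism of the face poset must send pentagons to pentagons and hexagons to hexagons, so each magical configuration is carried to a magical configuration with the same constants $(S_p, S_h)$. You merely make explicit the formal verifications (the definition $(\sigma\cdot f)(v)=f(\sigma_V^{-1}(v))$, the action axioms, and the reindexing of face sums) that the paper leaves implicit.
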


\begin{proof} An automorphism of $G(C_n)$ sends the pentagons to the pentagons, and the hexagons to the hexagons. Therefore, a magic configuration on $C_n$ with magic constants $S_p$ and $S_h$ is sent to a magic configuration with the same constants.

According to \eqref{dejstvo} and the fact that any magical configuration $f$ is a bijection, to fix $f$, an automorphism  $\theta \in G(C_n)$ has to satisfy $\theta(v_i)=v_i$ for all $1\leq i \leq n$. But it means that $\theta$ fixes all vertices of $C_n$, so it must be a trivial element of $G(C_n)$. Therefore, the action is free.
\end{proof}

In addition to the action induced by $G(C_n)$, there exists another interesting action on  $\mathcal{M}(C_n)$.  

\begin{proposition}\label{a2}
A map $x\mapsto n+1-x$ induces a non-trivial map on  $\mathcal{M}(C_n)$. 
\end{proposition}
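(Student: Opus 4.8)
The plan is to show that the label-reversal $x \mapsto n+1-x$ carries each magical configuration to another one, so that it descends to a well-defined self-map of $\mathcal{M}(C_n)$, and then to check that this self-map is not the identity. Concretely, given $f \in \mathcal{M}(C_n)$ I would define $f'(v) = n+1-f(v)$ and argue in three short steps: $f'$ is again a bijection, $f'$ is again magical, and $f \mapsto f'$ has no fixed points.

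The first step is immediate: since $x\mapsto n+1-x$ is an involution of the set $\{1,2,\dots,n\}$ and $f$ is a bijection $V\to\{1,\dots,n\}$, the composite $f'$ is a bijection as well. The second step is the crux, and it rests on the fact that every pentagonal face has exactly five vertices and every hexagonal face exactly six. Indeed, for any pentagon $P_j\in\mathcal{P}$ one computes
\[
\sum_{v\in P_j} f'(v)=\sum_{v\in P_j}\bigl(n+1-f(v)\bigr)=5(n+1)-S_p,
\]
and for any hexagon $H_i\in\mathcal{H}$,
\[
\sum_{v\in H_i} f'(v)=6(n+1)-S_h.
\]
Because the right-hand sides do not depend on the chosen face, $f'$ is magical with constants $S_p'=5(n+1)-S_p$ and $S_h'=6(n+1)-S_h$. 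As a consistency test one can verify that this pair again satisfies \eqref{veza}, so $f'$ genuinely lands in $\mathcal{M}(C_n)$.

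For the third step I would note that the map is an involution, since applying it twice returns $n+1-(n+1-f(v))=f(v)$, and that it is in fact fixed-point-free: if $f'=f$ then $n+1-f(v)=f(v)$ for every vertex, forcing $f(v)=(n+1)/2$ identically, which contradicts injectivity of $f$ whenever $C_n$ has more than one vertex. Hence, whenever $\mathcal{M}(C_n)\neq\emptyset$, the induced map is a fixed-point-free involution and in particular nontrivial.

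I do not expect a serious obstacle here, as the statement reduces to a direct verification; the only point demanding care is to exploit the uniform face sizes $5$ and $6$, so that the additive shift by $n+1$ contributes a constant to each face sum. It is worth recording as a remark that the involution pairs the constant vector $(S_p,S_h)$ with $(5(n+1)-S_p,\,6(n+1)-S_h)$ and thereby biject the corresponding configuration sets; this explains the mirror symmetry of the solution counts in Table 2, and in the $C_{26}$ case it accounts for the excluded pairs $(73,59)$ and $(62,103)$ being swapped by the map, so that one being empty forces the other to be empty as well.
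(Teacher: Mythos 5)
Your proposal is correct and follows essentially the same route as the paper: the key observation in both is that the uniform face sizes $5$ and $6$ turn the relabeling into a magical configuration with constants $S_p'=5(n+1)-S_p$ and $S_h'=6(n+1)-S_h$. Your additional fixed-point-freeness argument (and the remark explaining the mirrored pairs in Tables 2 and 4) is a valid bonus that the paper only develops later, in the proof that $G(C_n)\oplus\mathbf{Z}_2$ acts freely on $\mathcal{M}(C_n)$.
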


\begin{proof} Observe that a magic configuration on $C_n$ with magic constants $S_p$ and $S_h$ is sent to a magic configuration with $S^\prime_p=5n+5-S_p$ and $S^\prime_h=6n+6-S_h$.   
\end{proof}

Let us denote the induced map from Proposition \ref{a2} by $h$. Then $h$ sends a magic configuration $f$ to

\begin{align}\label{dejstvo2}
 \{(v_1, f(v_1)),  \dots, (v_n, f(v_n))\}\mapsto \{(v_1, n+1-f(v_1)),   \dots, (v_n, n+1-f(v_n))\}   
\end{align}

The composition $h \circ h$ is the identity map, so $h$ defines an $\mathbf{Z}_2$ action on $\mathcal{M}(C_n)$.

The following fact is a corollary of Propositions \ref{a1} and \ref{a2}.

\begin{theorem} The group $G(C_n)\oplus\mathbf{Z}_2$ acts freely on $\mathcal{M}(C_n)$.    
\end{theorem}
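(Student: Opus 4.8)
The plan is to use the standard criterion that an action is free precisely when no non-identity element of the group fixes any point. I would write a typical element of $G(C_n)\oplus\mathbf{Z}_2$ as a pair $(\sigma,\tau^{k})$ with $\sigma\in G(C_n)$ and $k\in\{0,1\}$, where $\tau$ denotes the involution $x\mapsto n+1-x$ of Proposition \ref{a2}. First I would record that the two actions commute: with $(\sigma\cdot f)(v)=f(\sigma^{-1}v)$ and $(\tau\cdot f)(v)=n+1-f(v)$, either order of composition produces $v\mapsto n+1-f(\sigma^{-1}v)$, so we genuinely have an action of the direct product and it suffices to examine each pair $(\sigma,\tau^{k})$ separately. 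I would then split according to the value of $k$.

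For $k=0$ the fixed-point equation reads $f(\sigma^{-1}v)=f(v)$ for all $v$. Since $f$ is a bijection, it is in particular injective, so this forces $\sigma^{-1}v=v$ for every vertex $v$; that is, $\sigma$ acts trivially on $V$. Because a face of a polytope is determined by the set of its vertices, an element of $G(C_n)$ fixing every vertex fixes every face and is therefore the identity. Hence $(\sigma,\tau^{0})$ fixes a configuration only when $\sigma=\mathrm{id}$, i.e. only for the identity of the group.

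The interesting case is $k=1$. Here the fixed-point equation is
\begin{equation}
f(v)+f(\sigma^{-1}v)=n+1\qquad\text{for all }v\in V. \tag{$\star$}
\end{equation}
I would sum $(\star)$ over the five vertices of a single pentagon $P$. The first term contributes $\sum_{v\in P}f(v)=S_p$, while the second contributes $\sum_{v\in P}f(\sigma^{-1}v)=\sum_{w\in\sigma^{-1}(P)}f(w)$; since $\sigma^{-1}$ is a face-poset automorphism it carries the pentagon $P$ to a pentagon, so this second sum is again $S_p$. The right-hand side is $5(n+1)$, giving $2S_p=5(n+1)$. This is the crux: every fullerene has an even number of vertices, so $n+1$ is odd, hence $5(n+1)$ is odd and cannot equal the even number $2S_p$ — a contradiction. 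Thus no element with $k=1$ has a fixed point, and together with the previous paragraph this shows the stabiliser of every $f\in\mathcal{M}(C_n)$ is trivial, so the action is free.

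I expect the only real subtlety to be spotting that $(\star)$ should be summed over a \emph{pentagon} rather than a hexagon: the analogous computation on a hexagon yields merely $2S_h=6(n+1)$, i.e. $S_h=3(n+1)$, which is consistent and gives no contradiction. The whole argument is therefore driven by the coincidence that pentagons have an odd number of vertices while fullerenes have an even number of them; once that observation is made, the remainder is a routine parity check, and the $k=0$ case together with the commutativity of the two actions is entirely formal.
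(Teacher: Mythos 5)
Your proof is correct and takes essentially the same route as the paper: the crux in both is that an element involving the involution $x \mapsto n+1-x$ would force $2S_p = 5(n+1)$, which is impossible by parity since $n$ is even. The only differences are cosmetic --- the paper phrases the parity step as the magical constants changing ($S'_p = 5n+5-S_p \neq S_p$) rather than summing the fixed-point equation over a pentagon, and it merely asserts the pure-automorphism case ("the symmetries of $C_n$ do not have a fix point"), which you instead derive carefully from injectivity of $f$, making your write-up a slightly more complete version of the same argument.
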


\begin{proof} Since $G(C_n)$ acts freely on $\mathcal{M}(C_n)$, we can assume that a magic configuration $f$ is fixed by $(\theta, h)$ for some automorphism $\theta \in G(C_n)$. However, by  \eqref{dejstvo} and \eqref{dejstvo2} $(\theta, h)$ sends $f$ with  magic constants $S_p$ and $S_h$
  to a magic configuration with $S^\prime_p=5n+5-S_p$ and $S^\prime_h=6n+6-S_h$. But since $n$ is even, it follows that $S^\prime_p\neq S_p$ contradicting the assumption that $f$ was fixed. Therefore, the action of $G(C_n)\oplus\mathbf{Z}_2$ on $\mathcal{M}(C_n)$ is free.   
\end{proof}

Since the automorphism group of $C_{24}$ is the dihedral group $D_{6}$ (\cite{deza} and \cite{Kutnar}), so the claim above is reflected in Table \ref{tab:solutions} by all numbers in the last column being divisible by 24. In this way, we obtain the number of non-isomorphic magic arrangements. 

The examples of $C_{24}$ and $C_{26}$ show that the magic configurations over a fullerene form an interesting mathematical structure that deserves further exploration. The next question to be addressed is:

\begin{problem} Determine all positive integers $n$ such that fullerene $C_n$ admits a magic configuration.
   \end{problem}
   
\section{Magic Configurations and Principal Component Analysis}

As we will see in this section, the set $\mathcal{M}(C_{24})$ provides interesting results when Principal Component Analysis (PCA) techniques are used to allow the solutions to be visualised in $\mathbb{R}^2$.

PCA is a statistical method of dimensionality reduction, often used with big and highly complex datasets \cite{JolliffeCadima}. The technique involves solving for new variables, which are linear functions of the original variables in the dataset, also known as the principal components. These principal components are the linear combinations of the original variables, determined by the eigenvectors of the data’s covariance matrix; the eigenvalues associated with each eigenvector are an indicator of how much variance each component explains \cite{JolliffeCadima}. In the case of  $\mathcal{M}(C_{24})$, we must first consider a magic configuration, a row in the dataset defined by variables $v_1$ to $v_{24}$, as a vector in $\mathbb{R}^{24}$. The first two principal components form a plane onto which the points are projected, yielding a 2-dimensional plot that is easily interpretable. This process and its mathematics are detailed below.

 PCA identifies dominant directions of variation in the solution set \( S \in \mathbb{R}^{N \times 24} \), where each row represents a labelling vector \( \mathbf{v}_i \in \mathbb{R}^{24} \).  
\noindent Centring is first performed by subtracting the empirical mean vector 
\[
\boldsymbol{\mu} = \frac{1}{N} \sum_{i=1}^N \mathbf{v}_i,
\]
to obtain the centred matrix \( \tilde{V} = V - \mathbf{1}_N \boldsymbol{\mu} \), where \( \mathbf{1}_N \) is an \( N \times 1 \) vector of ones.  
The covariance matrix is then computed as \( \Sigma = \frac{1}{N} \tilde{V}^\top \tilde{V} \), whose eigenvalue decomposition
\[
\Sigma = \sum_{i=1}^{24} \lambda_i \, \mathbf{u}_i \mathbf{u}_i^\top
\]
yields eigenvectors \( \mathbf{u}_i \) (principal directions) and eigenvalues \( \lambda_i \) (explained variances). From the definition of the covariance matrix, it follows that the matrix is symmetric and positive semi-definite, so all of its eigenvalues are non-negative real values.  

 The loading of vertex \( v \) on component \( i \) is defined as
\[
L_{v,i} = u_{v,i} \sqrt{\lambda_i},
\]
where \( u_{v,i} \) is the element of the eigenvector \( \mathbf{u}_i \) corresponding to vertex \( v \).  
Loadings quantify the extent to which each vertex contributes to a given principal component. Vertices with larger absolute loadings have a greater influence on that component’s variance and orientation.

To reduce dimensionality, the data is projected onto the first \( k \) principal components, with  
\[
W = 
\begin{bmatrix}
\mathbf{u}_1 & \mathbf{u}_2 & \dots & \mathbf{u}_k
\end{bmatrix}, \quad 
Z = \tilde{X} W.
\]
For visualization in \( \mathbb{R}^2 \), \( k = 2 \) is used, giving
\[
Z = \tilde{X} 
\begin{bmatrix}
\mathbf{u}_1 & \mathbf{u}_2
\end{bmatrix}
\in \mathbb{R}^{N \times 2},
\]
which represents each solution projected onto the plane spanned by the first two principal components.

In Table \ref{component_table} the eigenvalues for each pair of magic constants for $C_{24}$ are shown. The rank of \eqref{sis1} is $14$ and the nullity is therefore $10$ as it is indicated in Table \ref{component_table} with $14$ eigenvalues clustered around zero. It means that the set of the solutions of \eqref{sis1} is 10 10-dimensional affine subspace of $\mathbb{R}^{24}$. Therefore, an infinite set of lattice solutions is highly expected. Non-zero eigenvalues for each pair of magic constants appear in pairs of two same eigenvalues, confirming the action of the automorphism group of $G(C_{24})$  on the set of magical configurations for a fixed pair of magical constants. 

\begin{table}[h]
\centering

\tiny
\setlength{\tabcolsep}{5pt}
\begin{tabular}{|c|cc|cc|cc|cc|cc|cc|}
\hline
\textbf{PC} & \multicolumn{2}{c|}{\textbf{Pair 1}} & \multicolumn{2}{c|}{\textbf{Pair 2}} & \multicolumn{2}{c|}{\textbf{Pair 3}} & \multicolumn{2}{c|}{\textbf{Pair 4}} & \multicolumn{2}{c|}{\textbf{Pair 5}} & \multicolumn{2}{c|}{\textbf{Pair 6}} \\
\hline
 & \textbf{57,108} & \textbf{68,42} & \textbf{58,102} & \textbf{67,48} & \textbf{59,96} & \textbf{66,54} & \textbf{60,90} & \textbf{65,60} & \textbf{61,84} & \textbf{64,66} & \textbf{62,78} & \textbf{63,72} \\
\hline
1 & 57.05 & 57.05 & 87.32 & 87.32 & 114.61 & 114.61 & 114.04 & 114.04 & 121.66 & 121.66 & 121.28 & 121.28 \\
2 & 57.05 & 57.05 & 87.32 & 87.32 & 114.61 & 114.61 & 114.04 & 114.04 & 121.66 & 121.66 & 121.28 & 121.28 \\
3 & 50.24 & 50.24 & 66.39 & 66.39 & 90.26 & 90.26 & 105.24 & 105.24 & 120.07 & 120.07 & 116.06 & 116.06 \\
4 & 50.24 & 50.24 & 66.39 & 66.39 & 90.26 & 90.26 & 105.24 & 105.24 & 120.07 & 120.07 & 116.06 & 116.06 \\
5 & 36.23 & 36.23 & 65.58 & 65.58 & 84.35 & 84.35 & 101.40 & 101.40 & 107.31 & 107.31 & 112.10 & 112.10 \\
6 & 36.23 & 36.23 & 65.58 & 65.58 & 84.35 & 84.35 & 101.40 & 101.40 & 107.31 & 107.31 & 112.10 & 112.10 \\
7 & 34.91 & 34.91 & 64.54 & 64.54 & 81.01 & 81.01 & 94.45 & 94.45 & 104.14 & 104.14 & 111.67 & 111.67 \\
8 & 34.91 & 34.91 & 64.54 & 64.54 & 81.01 & 81.01 & 94.45 & 94.45 & 104.14 & 104.14 & 111.67 & 111.67 \\
9 & 33.93 & 33.93 & 48.52 & 48.52 & 57.92 & 57.92 & 84.93 & 84.93 & 94.86 & 94.86 & 110.93 & 110.93 \\
10 & 33.93 & 33.93 & 48.52 & 48.52 & 57.92 & 57.92 & 84.93 & 84.93 & 94.86 & 94.86 & 110.93 & 110.93 \\
11 & $\approx$0 & $\approx$0 & $\approx$0 & $\approx$0 & $\approx$0 & $\approx$0 & $\approx$0 & $\approx$0 & $\approx$0 & $\approx$0 & $\approx$0 & $\approx$0 \\
12 & $\approx$0 & $\approx$0 & $\approx$0 & $\approx$0 & $\approx$0 & $\approx$0 & $\approx$0 & $\approx$0 & $\approx$0 & $\approx$0 & $\approx$0 & $\approx$0 \\
13 & $\approx$0 & $\approx$0 & $\approx$0 & $\approx$0 & $\approx$0 & $\approx$0 & $\approx$0 & $\approx$0 & $\approx$0 & $\approx$0 & $\approx$0 & $\approx$0 \\
14 & $\approx$0 & $\approx$0 & $\approx$0 & $\approx$0 & $\approx$0 & $\approx$0 & $\approx$0 & $\approx$0 & $\approx$0 & $\approx$0 & $\approx$0 & $\approx$0 \\
15 & $\approx$0 & $\approx$0 & $\approx$0 & $\approx$0 & $\approx$0 & $\approx$0 & $\approx$0 & $\approx$0 & $\approx$0 & $\approx$0 & $\approx$0 & $\approx$0 \\
16 & $\approx$0 & $\approx$0 & $\approx$0 & $\approx$0 & $\approx$0 & $\approx$0 & $\approx$0 & $\approx$0 & $\approx$0 & $\approx$0 & $\approx$0 & $\approx$0 \\
17 & 0.00 & 0.00 & $\approx$0 & $\approx$0 & $\approx$0 & $\approx$0 & $\approx$0 & $\approx$0 & $\approx$0 & $\approx$0 & 0.00 & 0.00 \\
18 & 0.00 & 0.00 & 0.00 & 0.00 & $\approx$0 & $\approx$0 & $\approx$0 & $\approx$0 & $\approx$0 & $\approx$0 & 0.00 & 0.00 \\
19 & 0.00 & 0.00 & 0.00 & 0.00 & 0.00 & 0.00 & 0.00 & 0.00 & 0.00 & 0.00 & 0.00 & 0.00 \\
20 & 0.00 & 0.00 & 0.00 & 0.00 & 0.00 & 0.00 & 0.00 & 0.00 & 0.00 & 0.00 & 0.00 & 0.00 \\
21 & 0.00 & 0.00 & 0.00 & 0.00 & 0.00 & 0.00 & 0.00 & 0.00 & 0.00 & 0.00 & 0.00 & 0.00 \\
22 & 0.00 & 0.00 & 0.00 & 0.00 & 0.00 & 0.00 & 0.00 & 0.00 & 0.00 & 0.00 & 0.00 & 0.00 \\
23 & 0.00 & 0.00 & 0.00 & 0.00 & 0.00 & 0.00 & 0.00 & 0.00 & 0.00 & 0.00 & 0.00 & 0.00 \\
24 & 0.00 & 0.00 & 0.00 & 0.00 & 0.00 & 0.00 & 0.00 & 0.00 & 0.00 & 0.00 & 0.00 & 0.00 \\
\hline
\end{tabular}
\vspace{0.2 cm}
\caption{Eigenvalues of the covariance matrix for $C_{24}$}
\label{component_table}
\end{table}

 The projection for all 12 magic constants allowed by $\mathcal{M}(C_{24})$ is shown in Figure \ref{Figure8} below

\begin{figure}[H]
    \centering
    \includegraphics[width=1\textwidth]{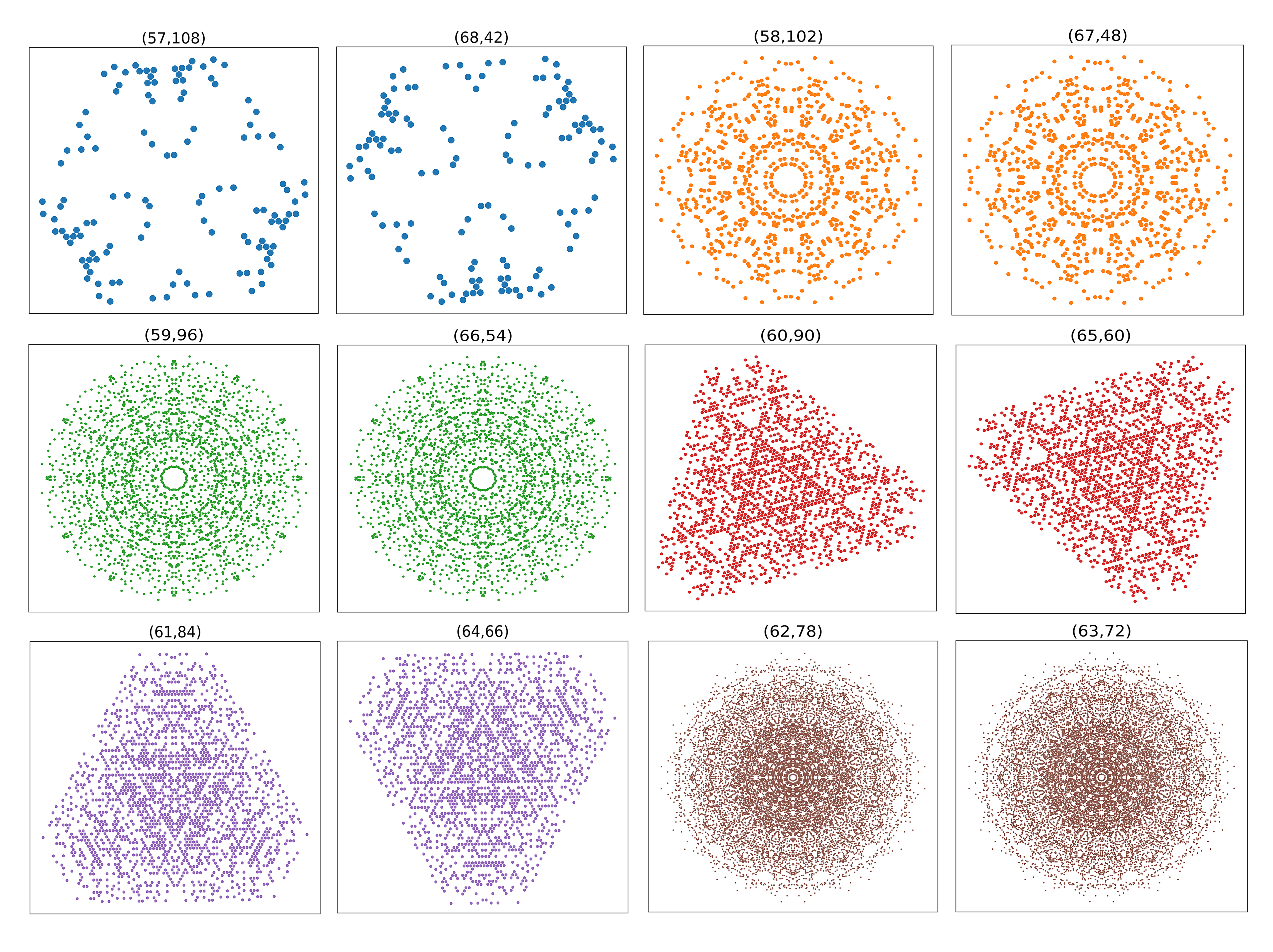} 
    \caption{}
    \label{Figure8}
\end{figure}

In Figure \ref{Figure8}, the PCA plots appear identical for different pairs of solutions that have the same number of solutions. This similarity arises because the solution sets share the same statistical structure. Specifically, the distribution of values across the vertices and the covariance structure of the assignments are alike. Since PCA focuses on patterns in variance, different configurations that produce the same covariance matrix—despite having different vertex values—lead to very similar PCA projections.

In addition to this observation, there also exists symmetrical structures within certain individual PCA plots in Figure \ref{Figure8}; Constants of (58,102), (59,96), and (62,78) provide circular plots with  12 axes of symmetry, while constants of (60,90) and (61,84) show triangular plots with three axes of symmetry. Precisely the dihedral group $D_{12}$ action can be noticed in the plots for (58, 102), (61, 84) and (62, 78) and the dihedral group $D_6 d$ in the plots for (57, 108), (60, 90) and (61, 84). While the presence of $D_6 d$ which is the automorphism group of $C_{24}$ is expectable, the appearance of $D_{12}$ is not unusual since $D_{12}$ and $D_6 d$ are isomorphic as abstract groups.

In the context of PCA methods, similar results may be expected for any fullerene $C_n$ admitting magical configurations. The nullity of the corresponding pentagonal and hexagonal faces for a pair of magical constants $(S_p, S_h)$ will be seen as the number of zero eigenvalues of the corresponding symmetric semi-definite covariance matrix  $\Sigma_{(S_p, S_h)}$. At the same time, the presence of the automorphism group $G(C_n)$ will produce multiplicities of certain eigenvalues. Also, $\mathbf{Z}_2$ action on $\mathcal{M}(C_{n})$ induced by \eqref{dejstvo2} produces the same PCA plots. 

\begin{proposition} Let $(S_p, S_h)$ and $(S'_p, S'_h)$ be magic constants for magic configurations on a fullerene $C_n$ such that $S_p+S'_p=5n+5$ and $S_p+S'_p=6n+6$. Then the corresponding covariance matrix  $\Sigma_{(S_p, S_h)}$ and $\Sigma_{(S'_p, S'_h)}$ are equal.    
\end{proposition}

\begin{proof} The covariance matrix $\Sigma_{(S_p, S_h)}$ is given by $\frac{1}{N} (V-\mathbf{1}_N \boldsymbol{\mu})^\top (V-\mathbf{1}_N \boldsymbol{\mu})$ where $V$ is $N\times n$ matrix of the solutions, \( \mathbf{1}_N \) is an \( N \times 1 \) vector of ones, and $\boldsymbol{\mu} = \frac{1}{N} \sum\limits_{i=1}^N \mathbf{v}_i$. The corresponding centred matrix of solutions for the pair $(S'_p, S'_h)$ is $$(n+1)\,  \mathbf{1}_N \times\mathbf{1}_n-V-((n+1) \mathbf{1}_N \times\mathbf{1}_n -\mathbf{1}_N \boldsymbol{\mu)}= \mathbf{1}_N \boldsymbol{\mu}-V, $$ where \( \mathbf{1}_n \) is an \( 1 \times n \) matrix of ones.  Thus, $$\Sigma_{(S'_p, S'_h)}= \frac{1}{N} (\mathbf{1}_N \boldsymbol{\mu}-V)^\top (\mathbf{1}_N \boldsymbol{\mu}-V)=\frac{1}{N} (V-\mathbf{1}_N \boldsymbol{\mu})^\top (V-\mathbf{1}_N \boldsymbol{\mu})=\Sigma_{(S_p, S_h)}, $$ as it was claimed.
    
\end{proof}

The set of magical configurations $\mathcal{M}(C_{n})$ is a unifying object for linear algebra, combinatorial design and group representations of $G(C_n)$, and it would be nice to say more about it.

\appendix 

\section{Program for Finding Magical Configurations}

Our program \cite{github} found all magical configurations on $C_{24}$ and $C_{26}$. The search for all valid magical labellings of \(C_{24}\) and \(C_{26}\) was formulated as a constraint satisfaction problem (CSP). Each vertex \(v_i\) was treated as an integer variable ranging from \(1\) to \(n\), subject to an \texttt{AllDifferent} constraint to ensure that every label appeared exactly once. Each pentagonal and hexagonal face introduced a linear constraint requiring the sum of its vertex labels to equal the respective pentagon or hexagon constant, \(S_p\) or \(S_h\).

Pseudocode below describes the algorithm’s search procedures: 

\begin{algorithm}[h]
\caption{Constraint-Based Search for Magic Labellings using CP-SAT}
\begin{algorithmic}[1]
\State \textbf{Input:} Fullerene type ($C_{24}$ or $C_{26}$), vertex count $n$, constants $(S_p, S_h)$
\State \textbf{Output:} All valid labellings $(v_1, v_2, \ldots, v_n)$ satisfying all face-sum constraints
\Statex

\State Define integer variables $v_i \in \{1, \ldots, n\}$
\State Apply \texttt{AllDifferent}($v_1, \ldots, v_n$) to ensures that all vertex labels are distinct.
\State Add face constraints:
\Statex \hspace{3em} $\sum_{v_i \in \text{pentagon}} v_i = S_p$
\Statex \hspace{3em} $\sum_{v_i \in \text{hexagon}} v_i = S_h$
\vspace{0.8em} 
\State Initialize CP-SAT solver with \texttt{enumerate\_all\_solutions = True} to find every possible labeling that satisfies all constraints.
\vspace{0.8em} 
\State \textbf{Note:} The solver explores possible assignments as a \textbf{search tree}, 
where each node represents a partial labelling and each branch corresponds to a variable choice.
\Statex

\Function{Search}{$partial\_assignment$}
    \If{all variables assigned and all constraints satisfied}
        \State Record solution to CSV
        \State \Return
    \EndIf
    \State Select next unassigned variable $v_k$
    \For{each feasible value $x$ in domain($v_k$)}
        \State Assign $v_k = x$
        \State Apply \textbf{constraint propagation} to update remaining domains
        \If{no constraint violated}
            \State \Call{Search}{$partial\_assignment \cup \{v_k = x\}$}
        \Else
            \State \textbf{Clause learning:} store this failed combination to skip later
        \EndIf
        \State Undo assignment of $v_k$ (\textbf{backtracking})
    \EndFor
\EndFunction
\Statex

\State \Call{Search}{$\emptyset$}
\Statex

\State \textbf{Solver mechanisms:}

\Statex \hspace{1.5em}\parbox[t]{0.9\linewidth}{
\textbf{Constraint propagation:} removes impossible values early.  
For example, for $([1,2,3,4,5,6], S_h)$, if $v_1, \dots, v_5$ are fixed, $v_6$ is determined automatically.
}

\Statex \vspace{0.3em}

\Statex \hspace{1.5em}\parbox[t]{0.9\linewidth}{
\textbf{Clause learning:} records conflicts like $(v_1,v_2,v_3)=(7,8,9)$ that violate a constraint, avoiding repetition.
}

\Statex \vspace{0.3em}

\Statex \hspace{1.5em}\parbox[t]{0.9\linewidth}{
\textbf{Backtracking:} when no feasible values remain, the solver returns to the previous variable and tries a new value.
}
\end{algorithmic}
\end{algorithm}

All admissible pairs \((S_p, S_h)\) were derived from \eqref{veza} and filtered under integer, range, and divisibility conditions before being used as model inputs. The model was implemented in Python~3.12 using the CP-SAT solver from Google OR-Tools, configured with \texttt{enumerate\_all\_solutions = True} to ensure complete enumeration of all valid labellings. The solver employs several exact, non-brute-force mechanisms that collectively optimise the search process. Constraint propagation continuously eliminates infeasible variable assignments by tightening domains as constraints interact. Clause learning records conflicts encountered during the search, preventing the solver from revisiting failed paths. Backtracking explores only feasible branches of the search tree, systematically returning to the last valid decision point when a contradiction is detected.  

These mechanisms reduce the theoretical search space from \(24^{24}\!\approx\!10^{33}\) combinations to roughly \(10^{6}\) feasible configurations while ensuring that no valid labelling is omitted. All results were saved in CSV format for reproducibility. Computations were performed on a dedicated workstation running Python~3.12, providing an exhaustive and exact enumeration of all configurations consistent with \eqref{veza} and the fullerene’s geometric constraints. 
\section*{Acknowledgments}

We are deeply thankful to the reviewers for their careful reading of our paper and their valuable feedback. Their keen insights and thoughtful suggestions were instrumental in refining our arguments and enhancing the clarity of our presentation. The first author was supported by  Project No. H20240855 of the Ministry of Human Resources and Social Security of the People's Republic of China, and by the Ministry of Science, Innovations and Technological Development of the Republic of Serbia.

\end{document}